\tikzset{neg/.style={
        decoration={markings,
            mark= at position 0.5 with {
                \node[transform shape] (tempnode) {$\backslash$};
            }
        },
        postaction={decorate}
    }
}
\theoremstyle{plain} % default
\newtheorem{thm}{ Theorem}[section]
\newtheorem{prop}[thm]{Proposition}
\newtheorem{defn}[thm]{Definition}
\newtheorem{ex}[thm]{Example}
\newtheorem* {note*}{Note}
\newtheorem{qu}{\bf Question}
\newcommand{\co} {\mathbb{C}}
\newcommand{\R} {\mathbb{R}}
\newcommand{\iy} {\infty}
\newcommand{\N} {\mathbb{N}}
\newcommand{\D} {\mathbb{D}}
\newcommand{\al}{\alpha}
\newcommand{\lm}{\lambda}
\newcommand{\eps}{\epsilon}
\newcommand{\h}{\mathcal H}
\newcommand{\bx}{\mathcal B(X)}
\newcommand{\norm}[1]{\left\Vert#1\right\Vert}
\newcommand{\abs}[1]{\left\vert#1\right\vert}
\newcommand{\set}[1]{\left\{#1\right\}}
\newcommand{\seq}[1]{\left<#1\right>}
\newcommand{\brc}[1]{\left(#1\right)}
\newcommand{\LZ}{\ell^{2}(\mathbb Z)}
\newcommand{\bo}{\mathbb{D}\backslash \left\{0\right\}}
\DeclareMathOperator{\spn}{span}
\title{\bf \Large $k$-bitransitive and compound operators on Banach spaces}
\author[1]{\bf\footnotesize Nareen Bamerni \thanks{nareen\_bamerni@yahoo.com}}
\author[2]{\bf Adem K{\i}l{\i}\c{c}man \thanks{akilicman@yahoo.com}}
\affil[1,2]{\bf Department of Mathematics, University Putra Malaysia,
43400 UPM, Serdang, Selangor, Malaysia}
\begin{document}
\date{}
\maketitle
\begin{abstract}
In this this paper, we introduce new classes of operators in complex Banach spaces, which we call $k$-bitransitive operators and compound operators to study the direct sum of diskcyclic operators. We create a set of sufficient conditions for $k$-bitransitivity and compound. We show the relation between topologically mixing operators and compound operators. Also, we extend the Godefroy-Shapiro Criterion for topologically mixing operators to compound operators. 
\end{abstract}

\section{Introduction}
A bounded linear operator $T$ on a separable Banach space $X$ is hypercyclic if there is a vector $x\in X$ such that $Orb(T,x)=\set{T^nx:n\ge 0}$ is dense in $X$, such a vector $x$ is called  hypercyclic for $T$, for more information on hypercyclic operators the reader may refer to  \cite{dynamic, chaos}. Similarly, an operator $T$ is called diskcyclic if there is a vector $x \in X$ such that the disk orbit $\D Orb(T,x)=\set{\al T^nx: \al\in \co, |\al|\le 1, n\in \N}$ is dense in $X$, such a vector $x$ is called  diskcyclic for $T$, for more details on diskcyclicity see \cite{m3,m5, cyclic}. \\

In 1982, Kitai presented in her PhD thesis some sufficient conditions for hypercyclic operators which are then called hypercyclic criterion \cite{Kitai}. Then Gethner and Shapiro \cite{Shapiro}  gave another form of this criterion. In 1987, Godefroy and  Shapiro \cite{28} created another hypercyclic criterion which is called Godefroy-Shapiro Criterion, that is a set of sufficient condition in terms of the eigenvalues of an operator to be hypercyclic.\\

It was proved that whenever the direct sum of $n$ operators is hypercyclic, then every operator is hypercyclic \cite{Kitai}. However, for the converse,  Salas constructed an
operator $T$ such that both it and its adjoint $T^*$ were hypercyclic, and so that their direct sum $T \oplus T^*$ was not. Moreover, Herrero asked in \cite{31} whether $T \oplus T$ is hypercyclic whenever is $T$. In 1999, B\'{e}s and Peris showed that an operator $T$ satisfies the hypercyclic criterion if and only if $T \oplus T$ is hypercyclic (see \cite{14}) which gives a positive answer to the Herrero's question.\\

For diskcyclic operators, Zeana proved that if the direct sum of $n$ operators is diskcyclic then every operator is diskcyclic \cite{cyclic}. However, the converse is unknown. Particularly, we have the following question:
\begin{qu}\label{104} If there are $k$ diskcyclic operators, what about their direct sum? \end{qu}

The main purpose of this paper is to give a partial answer to this question. We define and study $k$-bitransitive operators.  We determine conditions that ensure a linear operator to be $k$-bitransitive which is called $k$-bitransitive criterion. Then, we define compound operators as a general form of topologically mixing operators \cite{11}. However, we show by an example that not every compound operator is topologically mixing. Moreover, we define cross sets and junction sets to make the arguments involving  $k$-bitransitive and compound operators more transparent. Then we extend Godefroy-Shapiro Criterion \cite{28} for topologically mixing operators to compound operators. In particular,  a special case of \Cref{190} is when $p=1$ which is Godefroy-Shapiro Criterion \cite{dynamic}.  Morever, we create compound criterion which is a set of sufficient conditions for compound. Finally, We use these operators to prove that in some cases if $k$ operators are diskcyclic, then the direct sum of them is $k$-bitransitive which answer \Cref{104} for some special cases.

\section{Main results}
In this this paper, all Banach spaces are separable over the field $\co$ of complex numbers. Let $k\ge 1$ and  $T_i\in \bx$ for all $1\le i \le k$, and let $T=\bigoplus_{i=1}^k T_i: \bigoplus_{i=1}^k X \to \bigoplus_{i=1}^k X$ then we call each operator $T_i$ a component of $T$. We denote by $\D$ the closed unit disk in $\co$ and by $\N$ the set of all positive integers.

\begin{defn}\label{bi}
An operator $T\in\bx$ is called $k$-bitransitive if there exist $T_1,T_2, \ldots T_k\in \bx$ such that $T=T_1\oplus \ldots \oplus T_k$ and for any $2k$-tuples $U_1,\ldots,U_k,V_1,\ldots,V_k \subset X$ of nonempty open sets, there exist some $n\in \N$ and  $\al_1,\ldots,\al_k \in \bo$ such that $$(T_1\oplus T_2 \oplus \ldots \oplus T_k )^n(\al_1 U_1\oplus \ldots \oplus \al_k U_k)\cap (V_1\oplus \ldots \oplus V_k)\neq \phi $$
\end{defn}
It is clear from the above definition that $1$-bitransitive is identical to disk transitive which in turn  identical to diskcyclic (see \cite[Proposition 2.10]{m3}).\\

%---------------------------------------------------------------------------------------------------------
To simplify \Cref{bi}, we define the following concepts
%-----------------------------------------------------------------------------------------------------------------------
\begin{defn}
A set $A\subset \N\times \co^p $ is called bifinite if there exist a cofinite set $K=\set{n_k:k\in \N}\subset \N$ and sequences $\seq{a^{(i)}_n}_{n\in \N}\subset \co; i=1,\ldots,p$ such that $\set{(n_k, a^{(1)}_{n_k}, \ldots, a^{(p)}_{n_k}):\mbox{ for all } k\in \N} \subseteq A$ 
\end{defn}
%-----------------------------------------------------------------------------------------------------------------------
\begin{defn}
Let $k\ge 1$ be fixed. For all $1\le i\le k$, let $T_i$ be bounded linear operators on a Banach space $X$, and $A_i,B_i$ be non-empty subsets of $X$. Let $T=T_1\oplus \ldots \oplus T_k$, $A=A_1\oplus A_2 \oplus \ldots \oplus A_k $ and $B=B_1\oplus B_2 \oplus \ldots \oplus B_k $. The cross set from $A$ to $B$ is defined as
$$ C_T(A, B)=C(A, B)=\set{n\in \N: T^n(\al_1 A_1\oplus\ldots \oplus\al_k A_k) \cap (B_1\oplus \ldots \oplus B_k)\neq \phi}$$ for some $\al_i \in \bo; i=1,\ldots,k$
\\
\end{defn}
In the above definition, if $\al_i=\al_{i+1}$ for all $i=1,\ldots,k-1$, the return set $N(\al A,B)$ (see \cite{akin}) is equivalent to $C(A,B)$. 
%-----------------------------------------------------------------------------------------------------------------------
\begin{defn}
Let $k\ge 1$ be fixed. For all $1\le i\le k$, let $T_i$ be bounded linear operators on a Banach space $X$, and $A_i,B_i$ be non-empty subets of $X$. Let $T=T_1\oplus \ldots \oplus T_k$, $A=A_1\oplus A_2 \oplus \ldots \oplus A_k $ and $B=B_1\oplus B_2 \oplus \ldots \oplus B_k $. The junction set from $A$ to $B$ is defined as
$ J_T(A, B)=J(A, B)=\{(n,\al_1,\ldots,\al_k)\in \N \times \D^k\backslash \set{(0,\ldots,0)}: T^n(\al_1 A_1\oplus\ldots \oplus\al_k A_k) \cap (B_1\oplus \ldots \oplus B_k)\neq \phi\}$
\end{defn}
%-----------------------------------------------------------------------------------------------------------------------
The next proposition gives an equivalent definition to $k$-bitransitivity in terms of cross and junction sets
\begin{prop}\label{126}
Let $T=T_1\oplus \ldots \oplus T_k$. Then $T$ is $k$-bitransitive, if and only if for any $2k$-tuples of nonempty open sets $U_i,V_i \subset X$, $i=1,\ldots,k$
$$\bigcap_{i=1}^k C_{T_i}(U_i,V_i)\neq \phi$$
or if and only if for any $2k$-tuples of nonempty open sets $U_i,V_i \subset X$, there exists $\al_i\in \bo$ and $n\in \N$ such that  
$$(n,\al_i)\in J_{T_i}(U_i, V_i)$$
\end{prop}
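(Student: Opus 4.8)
The plan is to reduce both equivalences to a single coordinatewise observation and then read them off directly from the definitions of the cross and junction sets. The central fact I would establish first is that for a direct sum the iterate, the scalar multiplication, and the intersection all act coordinatewise. Since $(T_1\oplus\ldots\oplus T_k)^n=T_1^n\oplus\ldots\oplus T_k^n$ and $\alpha_1 U_1\oplus\ldots\oplus\alpha_k U_k$ is already a product set, one has
$$(T_1\oplus\ldots\oplus T_k)^n(\alpha_1 U_1\oplus\ldots\oplus\alpha_k U_k)\cap(V_1\oplus\ldots\oplus V_k)=\bigoplus_{i=1}^k\big(T_i^n(\alpha_i U_i)\cap V_i\big).$$
A finite box-product of sets is nonempty if and only if each factor is nonempty, so this intersection is nonempty exactly when $T_i^n(\alpha_i U_i)\cap V_i\neq\phi$ for every $i=1,\ldots,k$. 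This is the only genuinely structural step; everything else is bookkeeping against the definitions.

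With this in hand I would prove the first equivalence. If $T$ is $k$-bitransitive and $U_i,V_i$ are fixed, then by \Cref{bi} there are a single $n\in\N$ and scalars $\alpha_1,\ldots,\alpha_k\in\bo$ making the displayed intersection nonempty; by the coordinatewise reduction this forces $T_i^n(\alpha_i U_i)\cap V_i\neq\phi$ for each $i$, which is precisely the assertion $n\in C_{T_i}(U_i,V_i)$, so this common $n$ lies in $\bigcap_{i=1}^k C_{T_i}(U_i,V_i)$. Conversely, if $\bigcap_{i=1}^k C_{T_i}(U_i,V_i)\neq\phi$, I would pick $n$ in this intersection; for each $i$ the membership $n\in C_{T_i}(U_i,V_i)$ supplies some $\alpha_i\in\bo$ (chosen independently per coordinate) with $T_i^n(\alpha_i U_i)\cap V_i\neq\phi$, and reassembling these through the coordinatewise identity yields exactly the nonempty intersection demanded by \Cref{bi}.

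The junction-set version is then immediate. For a single component, $(n,\alpha_i)\in J_{T_i}(U_i,V_i)$ with $\alpha_i\in\bo$ says precisely that $T_i^n(\alpha_i U_i)\cap V_i\neq\phi$ — the excluded point never arises, since $n\in\N$ is positive and $\alpha_i\neq 0$ — which is the same witness certifying $n\in C_{T_i}(U_i,V_i)$. Hence ``there exist $\alpha_i\in\bo$ and $n$ with $(n,\alpha_i)\in J_{T_i}(U_i,V_i)$ for all $i$'' and ``$n\in\bigcap_{i=1}^k C_{T_i}(U_i,V_i)$'' are two phrasings of one statement, both equivalent to $k$-bitransitivity by the previous paragraph. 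The one point deserving care, and the only place the statement could go wrong, is the quantifier structure of the scalars: the index $n$ must be shared across all coordinates while the $\alpha_i$ may vary. This is exactly why the correct object is the intersection of the per-coordinate cross sets $C_{T_i}$, each carrying its own existential over $\alpha_i$, rather than a single cross set $C_T$ with one common scalar, and I would state this distinction explicitly to avoid conflating the two.
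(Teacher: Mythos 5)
Your proposal is correct: the coordinatewise identity $(T_1\oplus\ldots\oplus T_k)^n(\alpha_1U_1\oplus\ldots\oplus\alpha_kU_k)\cap(V_1\oplus\ldots\oplus V_k)=\bigoplus_{i=1}^k\left(T_i^n(\alpha_iU_i)\cap V_i\right)$ together with the fact that a finite product of sets is nonempty exactly when each factor is, plus the observation that $n$ is shared while each $\alpha_i$ carries its own existential, is precisely the content of the statement. The paper states this proposition without proof, treating it as immediate from the definitions, so your argument is exactly the routine verification it leaves implicit; your explicit remark on the quantifier placement of the $\alpha_i$ is the one point genuinely worth recording.
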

%-----------------------------------------------------------------------------------------------------------------------

The following theorem gives a set of sufficient conditions for $r$-bitransitivity. 

 \begin{thm}[$r$-bitransitive criterion]\label{175}
 Let $T_1, \ldots, T_r \in \bx $, suppose that there exists an increasing sequence of positive integers $\seq{n_k}$, and suppose that for all $1\le i \le r$ there exist sequences $\seq{\lm^{(i)}_{n_k}} \subset \bo$ for all $k\ge 1$, dense sets $X_i,Y_i \subset X$, and maps $S_i: Y_i \to X$  such that for all $x_i\in X_i$ and  $y_i\in Y_i$
\begin{enumerate}
\item $\bigoplus_{i=1}^r\lm^{(i)}_{n_k}T_i^{n_k}(x_1,\ldots ,x_r) \to (0,\ldots,0) $;
\item $\bigoplus_{i=1}^r\frac{1}{\lm^{(i)}_{n_k}}S_i^{n_k}(y_1,\ldots,y_r) \to (0,\ldots,0)$;
\item $\bigoplus_{i=1}^r T_i^{n_k} S_i^{n_k}(y_1,\ldots,y_r) \to (y_1,\ldots,y_r)$ .
\end{enumerate}
Then $T_1\oplus \ldots \oplus T_r$ is $r$-bitransitive.
\end{thm}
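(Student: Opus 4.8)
The plan is to verify the characterization of $r$-bitransitivity supplied by \Cref{126}: it suffices to show that for every $2r$-tuple of nonempty open sets $U_i,V_i\subset X$ ($i=1,\ldots,r$) the intersection of cross sets $\bigcap_{i=1}^r C_{T_i}(U_i,V_i)$ is nonempty. Concretely, I would exhibit a single index $n=n_k$ together with scalars $\al_i\in\bo$ for which $T_i^{n_k}(\al_i U_i)\cap V_i\neq\phi$ holds simultaneously for all $i=1,\ldots,r$, which is exactly what puts $n_k$ into each cross set.

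First I would use density to fix anchor points: since $X_i$ is dense and $U_i$ is nonempty open, choose $x_i\in X_i\cap U_i$, and since $Y_i$ is dense and $V_i$ is nonempty open, choose $y_i\in Y_i\cap V_i$. The decisive move is to set $\al_i=\lm^{(i)}_{n_k}\in\bo$ and to test the candidate vectors
$$ u_i \;=\; x_i+\frac{1}{\lm^{(i)}_{n_k}}\,S_i^{n_k}y_i\,, \qquad i=1,\ldots,r, $$
the idea being that the disk coefficient $\al_i$ is engineered to cancel the factor $1/\lm^{(i)}_{n_k}$ after one application of $T_i^{n_k}$.

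Next I would check the two convergences that make $n_k$ work. On the source side, hypothesis (2) gives $\tfrac{1}{\lm^{(i)}_{n_k}}S_i^{n_k}y_i\to 0$, so $u_i\to x_i\in U_i$; since $U_i$ is open, $u_i\in U_i$ for all large $k$. On the target side, linearity yields
$$ T_i^{n_k}(\al_i u_i)=\al_i T_i^{n_k}u_i=\lm^{(i)}_{n_k}T_i^{n_k}x_i+T_i^{n_k}S_i^{n_k}y_i\,, $$
whose first summand tends to $0$ by hypothesis (1) and whose second summand tends to $y_i$ by hypothesis (3); hence $T_i^{n_k}(\al_i u_i)\to y_i\in V_i$, so $T_i^{n_k}(\al_i u_i)\in V_i$ for all large $k$. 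Because hypotheses (1)--(3) are phrased as convergences in the finite direct sum $\bigoplus_{i=1}^r X$, they are equivalent to componentwise convergence, so one $k$ can be chosen large enough to realize all $r$ inclusions at once. For such a $k$ we obtain $n_k\in C_{T_i}(U_i,V_i)$ for every $i$, whence $n_k\in\bigcap_{i=1}^r C_{T_i}(U_i,V_i)$, and \Cref{126} completes the argument.

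The step I expect to be the main (though mild) obstacle is the simultaneity, namely guaranteeing that the same index $n_k$ serves every component at once. This is precisely why stating the hypotheses on the direct sum rather than componentwise is convenient: the direct-sum convergence packages the $r$ separate limits into a single limit, and finiteness of $r$ then lets one $k$ work uniformly. I would also note in passing that the iterates $S_i^{n_k}y_i$ are meaningful because $y_i\in Y_i$ and the hypotheses presuppose these compositions are defined along $Y_i$.
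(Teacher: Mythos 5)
Your proof is correct and follows essentially the same route as the paper's: both perturb $x_i$ by $\frac{1}{\lm^{(i)}_{n_k}}S_i^{n_k}y_i$, apply $\lm^{(i)}_{n_k}T_i^{n_k}$, invoke hypotheses (1)--(3) to land near $y_i$, and conclude via \Cref{126} that $\bigcap_{i=1}^r C_{T_i}(U_i,V_i)\neq\phi$. The only difference is presentational --- you argue componentwise while the paper writes the same computation in direct-sum notation.
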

\begin{proof}
Let $U_i,V_i$ be open sets for all $1\le i \le r$ then $ \bigoplus_{i=1}^r U_i$ is open in $\bigoplus_{i=1}^r X$. Also $\bigoplus_{i=1}^r X_i$ and $\bigoplus_{i=1}^r Y_i$
are dense in $\bigoplus_{i=1}^r X$. Let $(x_1,\ldots ,x_r)\in \bigoplus_{i=1}^r U_i \cap \bigoplus_{i=1}^r X_i$ and $(y_1,\ldots ,y_r)\in \bigoplus_{i=1}^r V_i \cap \bigoplus_{i=1}^r Y_i$. By $(2)$ we have
\begin{equation}\label{173} 
(x_1,\ldots ,x_r)+ \bigoplus_{i=1}^r\frac{1}{\lm^{(i)}_{n_k}}S_i^{n_k}(y_1,\ldots,y_r) \to (x_1,\ldots ,x_r)
\end{equation}
By $(1)$ and $(3)$ we have
\begin{equation}\label{174}
\displaystyle \left( \bigoplus_{i=1}^r\lm^{(i)}_{n_k}T_i^{n_k} \right)\left( (x_1,\ldots ,x_r)+ \bigoplus_{i=1}^r\frac{1}{\lm^{(i)}_{n_k}}S_i^{n_k}(y_1,\ldots,y_r) \right) \to (y_1,\ldots,y_r)
\end{equation}
as $k \to \iy$. 
From \Cref{173} and \Cref{174}, there exist $N\in \N$ such that
$$\left( \bigoplus_{i=1}^r\lm^{(i)}_{n_k}T_i^{n_k} \right) \left( \bigoplus_{i=1}^r U_i \right) \cap \bigoplus_{i=1}^r V_i \neq \phi \mbox{ for all } k\ge N$$
which is equivalent to 
$$(T_1\oplus \ldots \oplus T_r)^{n_k}(\lm^{(1)}_{n_k}U_1 \oplus \ldots \oplus \lm^{(r)}_{n_k}U_r )\cap (V_1 \oplus \ldots  \oplus V_r)\neq \phi \mbox{ for all } k\ge N$$
that is 
$$\bigcap_{i=1}^r C_{T_i}(U_i,V_i)\neq \phi$$
By \Cref{126}, $T_1\oplus \ldots \oplus T_r$ is $r$-bitransitive.  
\end{proof}

%--------------------------------------------------------------------------------------------------------------------------------------------------------
The following proposition gives another criterion for $r$-bitransitive operators without the need of scalar sequences.
\begin{prop}\label{176}
 Let $T_1, \ldots, T_r \in \bx $, suppose that there exists an increasing sequence of positive integers $\seq{n_k}$, and suppose that for all $1\le i \le r$ there exist dense sets $X_i,Y_i \subset X$, and maps $S_i: Y_i \to X$  such that for all $x_i\in X_i$ and  $y_i\in Y_i$
\begin{enumerate}
\item $\bigoplus_{i=1}^r T_i^{n_k}(x_1,\ldots ,x_r) \bigoplus_{i=1}^r S_i^{n_k}(y_1,\ldots,y_r)\to (0,\ldots,0) $;  \label{a22}
\item $\bigoplus_{i=1}^r S_i^{n_k}(y_1,\ldots,y_r) \to (0,\ldots,0)$; \label{b22}
\item $\bigoplus_{i=1}^r T_i^{n_k} S_i^{n_k}(y_1,\ldots,y_r) \to (y_1,\ldots,y_r)$ . \label{c22}
\end{enumerate}
Then $T_1\oplus \ldots \oplus T_r$ is $r$-bitransitive.
\end{prop}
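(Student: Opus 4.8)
The plan is to reduce \Cref{176} to the already-established \Cref{175} by manufacturing an appropriate scalar sequence $\seq{\lm^{(i)}_{n_k}}$ from the hypotheses. The key observation is that \Cref{176} trades the explicit scalars of \Cref{175} for a joint condition (\ref{a22}) on the product $T_i^{n_k}(x_i)$ together with $S_i^{n_k}(y_i)$. So rather than reprove $r$-bitransitivity from scratch, I would try to exhibit scalars making conditions (1)--(3) of \Cref{175} hold. However, the cleanest route is probably to mimic the proof of \Cref{175} directly, since the structural argument is identical and only condition (1) is replaced.

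First I would fix open sets $U_i, V_i \subset X$ for $1 \le i \le r$, and as before note that $\bigoplus U_i$ is open and $\bigoplus X_i$, $\bigoplus Y_i$ are dense in $\bigoplus X$. I would pick $(x_1,\ldots,x_r) \in \bigoplus U_i \cap \bigoplus X_i$ and $(y_1,\ldots,y_r) \in \bigoplus V_i \cap \bigoplus Y_i$. The candidate points to shoot are $z_k = (x_1,\ldots,x_r) + \bigoplus_{i=1}^r S_i^{n_k}(y_1,\ldots,y_r)$; by (\ref{b22}) these converge to $(x_1,\ldots,x_r)$, so for large $k$ they lie in $\bigoplus U_i$. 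The crucial step is to understand $\bigl(\bigoplus T_i^{n_k}\bigr)(z_k)$, which expands as $\bigoplus T_i^{n_k}(x_i) + \bigoplus T_i^{n_k} S_i^{n_k}(y_i)$. By (\ref{c22}) the second summand tends to $(y_1,\ldots,y_r)$; the role of (\ref{a22}) is to force the first summand $\bigoplus T_i^{n_k}(x_i)$ to vanish in the limit.

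Here is where the scalar $\lm^{(i)}_{n_k}$ must be introduced, because without a scalar the quantity $T_i^{n_k}(x_i)$ need not go to zero on its own; condition (\ref{a22}) only controls it jointly with the $S$-term. The main obstacle, and the point requiring care, is extracting from (\ref{a22}) a bounded-away-from-zero scalar sequence in $\bo$ that simultaneously drives $\lm^{(i)}_{n_k} T_i^{n_k}(x_i) \to 0$ while keeping $\tfrac{1}{\lm^{(i)}_{n_k}} S_i^{n_k}(y_i)$ under control, so that the hypotheses of \Cref{175} are met. Concretely, I would set $\lm^{(i)}_{n_k}$ to balance the two norms --- for instance choosing its modulus comparable to $\min\bigl(1, \norm{S_i^{n_k}(y_i)}^{1/2}/\norm{T_i^{n_k}(x_i)}^{1/2}\bigr)$ --- so that condition (\ref{a22}), which says the product of the norms is small, guarantees both $\lm^{(i)}_{n_k}T_i^{n_k}(x_i)$ and $\tfrac{1}{\lm^{(i)}_{n_k}}S_i^{n_k}(y_i)$ are small. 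One must handle the degenerate cases where $T_i^{n_k}(x_i) = 0$ separately, simply taking $\lm^{(i)}_{n_k} = 1$ there.

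Once such scalars are in hand, conditions (1)--(3) of \Cref{175} follow: (1) from the balancing together with (\ref{a22}), (2) likewise from the balancing, and (3) is inherited verbatim from (\ref{c22}). Then \Cref{175} immediately yields that $T_1 \oplus \ldots \oplus T_r$ is $r$-bitransitive. Alternatively, if the balancing turns out to be delicate, I would instead repeat the open-set argument directly: show $z_k \in \bigoplus U_i$ for large $k$ and $\bigl(\bigoplus T_i^{n_k}\bigr)(z_k) \in \bigoplus V_i$ for large $k$, which exhibits a nonempty intersection and hence, via \Cref{126}, $r$-bitransitivity --- but this direct route still secretly needs the scalar to make the cross-set formulation $(T_1\oplus\ldots\oplus T_r)^{n_k}(\lm^{(1)}_{n_k}U_1 \oplus \ldots \oplus \lm^{(r)}_{n_k}U_r) \cap (V_1\oplus\ldots\oplus V_r) \neq \phi$ come out, so the balancing of scalars is genuinely the heart of the matter.
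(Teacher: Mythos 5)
Your proposal is correct and follows essentially the same route as the paper: the paper also reduces \Cref{176} to \Cref{175} (via \Cref{177}) by manufacturing scalars $\lm^{(i)}_{n_k}$ of modulus at most $1$ that balance $\norm{T_i^{n_k}x_i}$ against $\norm{S_i^{n_k}y_i}$, so that condition (1) of \Cref{176} (read as a product of norms) yields conditions (1) and (2) of \Cref{175}. Your geometric-mean choice of $\lm^{(i)}_{n_k}$ is a cosmetic variant of the paper's normalization $\lm^{(i)}_{n_k}=\frac{1}{\eps}S_i^{n_k}y_i$, and your explicit handling of the degenerate case is, if anything, slightly more careful than the original.
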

The proof of the above proposition follows from the next proposition and \Cref{175}
\begin{prop}\label{177}
Both $r$-bitransitive criteria are equivalent.
\end{prop}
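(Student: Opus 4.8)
The plan is to read ``equivalent'' as the assertion that, for a fixed family $T_1,\dots,T_r\in\bx$, the data required by \Cref{175} (an increasing sequence $\seq{n_k}$, scalar sequences $\seq{\lm^{(i)}_{n_k}}\subset\bo$, dense sets $X_i,Y_i$ and maps $S_i$) exists if and only if the data required by \Cref{176} (the same objects \emph{without} the scalar sequences) exists. I would prove the two implications separately.

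First I would dispatch \Cref{175}$\Rightarrow$\Cref{176}, which is routine: keep the very same $\seq{n_k}$, $X_i$, $Y_i$ and $S_i$ and simply forget the scalars. Condition $(3)$ of \Cref{176} is literally condition $(3)$ of \Cref{175}. For condition $(2)$ of \Cref{176}, note that $|\lm^{(i)}_{n_k}|\le 1$ forces $\norm{S_i^{n_k}y_i}\le \frac{1}{|\lm^{(i)}_{n_k}|}\norm{S_i^{n_k}y_i}\to 0$, so condition $(2)$ of \Cref{175} delivers it. For the product condition $(1)$ of \Cref{176} I would multiply conditions $(1)$ and $(2)$ of \Cref{175} componentwise, whereupon the factors $\lm^{(i)}_{n_k}$ and $1/\lm^{(i)}_{n_k}$ cancel and leave exactly $\norm{T_i^{n_k}x_i}\,\norm{S_i^{n_k}y_i}\to 0$.

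The substantive direction is \Cref{176}$\Rightarrow$\Cref{175}, where the scalars must be manufactured. Using separability I would first replace $X_i,Y_i$ by countable dense subsets $\set{x^{(i)}_j}_j$ and $\set{y^{(i)}_l}_l$ and introduce the diagonal maxima $M^{(i)}_k=\max_{j\le k}\norm{T_i^{n_k}x^{(i)}_j}$ and $N^{(i)}_k=\max_{l\le k}\norm{S_i^{n_k}y^{(i)}_l}$. The crucial step, and the main obstacle, is to upgrade the \emph{pointwise} hypotheses of \Cref{176} into control of these maxima: for each fixed pair $(j,l)$ condition $(1)$ of \Cref{176} gives $\norm{T_i^{n_k}x^{(i)}_j}\,\norm{S_i^{n_k}y^{(i)}_l}\to 0$ and condition $(2)$ gives $\norm{S_i^{n_k}y^{(i)}_l}\to 0$, so a standard diagonalization lets me pass to a subsequence of $\seq{n_k}$ (which is again an admissible increasing sequence) along which $M^{(i)}_k N^{(i)}_k\to 0$ and $N^{(i)}_k\to 0$ simultaneously for all the finitely many $i$.

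With the maxima tamed, I would close the argument by choosing balancing scalars. Put $\eps^{(i)}_k=\max\set{\sqrt{M^{(i)}_k N^{(i)}_k},\,N^{(i)}_k,\,1/k}$, which is strictly positive and tends to $0$, and set $\lm^{(i)}_{n_k}=\eps^{(i)}_k/\max\set{\eps^{(i)}_k,M^{(i)}_k}\in(0,1]\subset\bo$. Splitting into the cases $M^{(i)}_k\le\eps^{(i)}_k$ (where $\lm^{(i)}_{n_k}=1$) and $M^{(i)}_k>\eps^{(i)}_k$ (where $\lm^{(i)}_{n_k}=\eps^{(i)}_k/M^{(i)}_k$), one checks at once that $\lm^{(i)}_{n_k}M^{(i)}_k\to 0$ and $N^{(i)}_k/\lm^{(i)}_{n_k}\to 0$; since $\norm{T_i^{n_k}x^{(i)}_j}\le M^{(i)}_k$ and $\norm{S_i^{n_k}y^{(i)}_l}\le N^{(i)}_k$ for $k$ large, this yields conditions $(1)$ and $(2)$ of \Cref{175} on the countable dense sets, while condition $(3)$ is inherited unchanged from condition $(3)$ of \Cref{176} along the subsequence. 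An appeal to \Cref{175} then finishes the proof. The only points demanding care are the diagonalization and the truncation keeping $\lm^{(i)}_{n_k}$ inside $\bo$; the degenerate possibilities $M^{(i)}_k=0$ or $N^{(i)}_k=0$ are absorbed painlessly by the $1/k$ term in $\eps^{(i)}_k$.
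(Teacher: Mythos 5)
Your proposal is correct, and the easy direction (\Cref{175}$\Rightarrow$\Cref{176}) coincides with the paper's: forget the scalars, use $|\lm^{(i)}_{n_k}|\le 1$ to bound $\norm{S_i^{n_k}y_i}$ by $\norm{\tfrac{1}{\lm^{(i)}_{n_k}}S_i^{n_k}y_i}$, and multiply conditions $(1)$ and $(2)$ for the product condition. For the converse, however, your construction of the scalars is genuinely different from, and noticeably more careful than, the paper's. The paper fixes one $\eps>0$ and one threshold $p$, sets $\lm^{(i)}_{n_k}=\tfrac{1}{\eps}S_i^{n_k}y_i$ (read: $\tfrac{1}{\eps}\norm{S_i^{n_k}y_i}$, since as written it equates a vector with a scalar), observes that $\norm{\tfrac{1}{\lm^{(i)}_{n_k}}S_i^{n_k}y_i}$ then \emph{equals} the fixed constant $\eps$, and concludes convergence to $0$ by the informal device ``if we let $\eps$ small enough'' --- a limit argument with $\eps$ frozen at the outset, and with scalars that depend on the particular points $y_i$, whereas the statement of \Cref{175} demands a single sequence $\seq{\lm^{(i)}_{n_k}}$ working simultaneously for all $x_i\in X_i$, $y_i\in Y_i$. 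Your route --- countable dense subsets, diagonal maxima $M^{(i)}_k,N^{(i)}_k$, a diagonal subsequence along which $M^{(i)}_kN^{(i)}_k\to 0$ and $N^{(i)}_k\to 0$, and then the balancing scalars $\lm^{(i)}_{n_k}=\eps^{(i)}_k/\max\set{\eps^{(i)}_k,M^{(i)}_k}$ with $\eps^{(i)}_k=\max\set{\sqrt{M^{(i)}_kN^{(i)}_k},N^{(i)}_k,1/k}$ --- produces one scalar sequence that is uniform over the dense sets, which is exactly what a literal application of \Cref{175} needs; the case analysis $M^{(i)}_k\le\eps^{(i)}_k$ versus $M^{(i)}_k>\eps^{(i)}_k$ checks out, and passing to a subsequence of $\seq{n_k}$ is harmless since \Cref{175} only requires \emph{some} increasing sequence and condition $(3)$ survives along subsequences. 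In short: same skeleton, but your version actually closes the uniformity gap that the paper's argument glosses over, at the cost of the extra separability/diagonalization machinery.
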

\begin{proof}
Let the hypothesis of \Cref{175} are given. By $(1)$ and $(2)$ in \Cref{175}, we get $(1)$ in \Cref{176}. Since $\seq{\frac{1}{\lm^{(i)}_{n_k}}} \not\to 0$ for all $1\le i \le r$, then $(2)$ in \Cref{175} implies that $\bigoplus_{i=1}^r S_i^{n_k}(y_1,\ldots,y_r) \to (0,\ldots,0)$; i.e, $(2)$ in \Cref{176} holds. It follows that the hypothesis of \Cref{176} hold true.\\
Conversely, suppose that the hypothesis of \Cref{176} are given. Then there exist a large $p\in \N$ and a small positive integer $\eps$ such that 
\begin{equation}\label{179}
\left\|\bigoplus_{i=1}^r T_i^{n_k}(x_1,\ldots ,x_r) \bigoplus_{i=1}^r S_i^{n_k}(y_1,\ldots,y_r)\right\|\le \eps^2 
\end{equation}
 and 
\begin{equation}\label{181}
\left\|\bigoplus_{i=1}^r S_i^{n_k}(y_1,\ldots,y_r) \right\|\le \eps 
\end{equation}
whenever $k\ge p$. \Cref{181} implies that for all  $1\le i \le r$ and  $k\ge p$  
$$ \left\| S_i^{n_k}y_i \right\|\le \eps $$
Let $\lm^{(i)}_{n_k}=\frac{1}{\eps}S_i^{n_k}y_i$ which implies that $ \left| \lm^{(i)}_{n_k}\right|\le 1$ and $ \left\| \frac{1}{\lm^{(i)}_{n_k}} S_i^{n_k}y_i \right\| = \eps $. It follows that
$$ \frac{1}{\lm^{(i)}_{n_k}} S_i^{n_k}y_i \to 0 \mbox{ for all } 1\le i \le r \mbox{ and } k\ge p$$
and so
\begin{equation}\label{178}
 \bigoplus_{i=1}^r\frac{1}{\lm^{(i)}_{n_k}} S_i^{n_k}(y_1,\ldots,y_r) \to 0 \mbox{ as } k\to \iy
\end{equation}
Furthermore, by \Cref{179}
$$\left\| \left( T_1^{n_k}x_1 S_1^{n_k}y_1,\ldots,  T_r^{n_k}x_r S_r^{n_k}y_r \right) \right\|= \left\| T_1^{n_k}x_1 S_1^{n_k}y_1 \right\|+\ldots+ \left\| T_r^{n_k}x_r S_r^{n_k}y_r \right\|  \le \eps^2 $$ 
Therefore,
$\left\| T_i^{n_k}x_i S_i^{n_k}y_i \right\|= \left\| T_i^{n_k}x_i \right\| \left|\lm^{(i)}_{n_k}\right| \eps\le \eps^2 $ for all  $1\le i \le r$ and  $k\ge p$. Then
$$\sum_{i=1}^r \left\|\lm^{(i)}_{n_k} T_i^{n_k}x_i \right\| \le r\eps $$
It follows that 
$$\left\| \left(\lm^{(1)}_{n_k} T_1^{n_k}x_1 ,\ldots, \lm^{(r)}_{n_k} T_r^{n_k}x_r  \right) \right\| \le r\eps $$
so
$$\left\|\bigoplus_{i=1}^r \lm^{(i)}_{n_k} T_i^{n_k}(x_1,\ldots ,x_r) \right\|\le r\eps $$
If we let $\eps$ small enough we get 
\begin{equation}\label{180}
\bigoplus_{i=1}^r \lm^{(i)}_{n_k} T_i^{n_k}(x_1,\ldots ,x_r)\to (0,\ldots,0) \mbox{ as } k \to \iy
\end{equation}
The proof follows from \Cref{178} and \Cref{180}
\end{proof}

%--------------------------------------------------------------------------------------
Now to answer \Cref{104}, we will define another class of operators which is called compound operators. 
\begin{defn}
An operator $T\in\bx$ is called compound if for any nonempty open sets $U,V$, there exist some $N\in \N$ and a sequence  $\seq{\al_n} \subset \bo$ such that $$T ^n(\al_n U)\cap V\neq \phi $$ for all $n\ge N$
\end{defn}
%--------------------------------------------------------------------------------------
\begin{prop}\label{163}
An operator $T$ is compound, if and only if for any two nonempty open sets $U, V \subset \h$, 
$$ J(U,V) \mbox{ is bifinite }$$
\end{prop}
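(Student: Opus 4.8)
The plan is to prove the two implications directly from the definitions, specializing the junction set and the notion of bifinite to the single-operator case $k=1$ (so $p=1$). Here $J(U,V)=\set{(n,\al)\in\N\times\D\setminus\set{(0,0)}:\, T^n(\al U)\cap V\neq\phi}$, and bifiniteness of a subset of $\N\times\co$ means precisely that there is a cofinite $K=\set{n_k:k\in\N}\subseteq\N$ together with a scalar sequence $\seq{a_n}$ such that $(n,a_n)$ lies in the set for every $n\in K$.

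First I would show that if $T$ is compound then $J(U,V)$ is bifinite. Fixing nonempty open $U,V$, the definition of compound supplies $N\in\N$ and $\seq{\al_n}\subset\bo$ with $T^n(\al_n U)\cap V\neq\phi$ for all $n\ge N$. I would then take $K=\set{n\in\N:\, n\ge N}$, which is cofinite, and keep the sequence $\seq{\al_n}$ itself (assigning arbitrary values for $n<N$). Since each $\al_n$ is nonzero and $n\ge N\ge 1$, the pair $(n,\al_n)$ lies in $\N\times\D\setminus\set{(0,0)}$ and meets the intersection condition, so $(n,\al_n)\in J(U,V)$ for every $n\in K$; this is exactly bifiniteness.

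For the converse I would begin from bifiniteness of $J(U,V)$, obtaining a cofinite $K$ and a sequence $\seq{a_n}$ with $(n,a_n)\in J(U,V)$ for all $n\in K$. As $\N\setminus K$ is finite, I can pick $N$ with $\set{n\in\N:\, n\ge N}\subseteq K$, so that $T^n(a_n U)\cap V\neq\phi$ for every $n\ge N$. Whenever $a_n\neq 0$ I set $\al_n=a_n$, which already supplies the required nonzero scalar together with the intersection condition.

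The step I expect to be the main obstacle is the degenerate case $a_n=0$, since the definition of compound demands scalars in $\bo$, i.e. nonzero. When $a_n=0$ the membership $(n,a_n)\in J(U,V)$ reduces to $T^n(\set{0})\cap V=\set{0}\cap V\neq\phi$, that is $0\in V$. I would dispose of this by a perturbation: fixing any $u\in U$, linearity gives $\al T^n u=T^n(\al u)\to 0\in V$ as $\al\to 0$, and since $V$ is open some $\al_n\in\bo$ of small modulus satisfies $\al_n T^n u\in T^n(\al_n U)\cap V$. Replacing every zero scalar in this manner produces $\seq{\al_n}\subset\bo$ with $T^n(\al_n U)\cap V\neq\phi$ for all $n\ge N$, so $T$ is compound, which completes the equivalence.
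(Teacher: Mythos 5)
Your proof is correct. The paper states this proposition without any proof, treating it as an immediate translation between the definition of a compound operator and bifiniteness of the junction set, and your two implications are precisely that translation. One remark: in the paper's definition of $J(U,V)$ the expression $\D^k\backslash \set{(0,\ldots,0)}$ is meant to exclude only the all-zero scalar tuple, so for $k=1$ every scalar occurring in $J(U,V)$ already lies in $\bo$ and the degenerate case $a_n=0$ that you single out as the main obstacle cannot actually arise; your perturbation argument covers the alternative parsing of the definition and is harmless, but it is not needed.
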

It is clear that every compound operator is disk transitive. A special case of compound operator is when $\al_n=1$ for all $n\ge N$, and it is called topologically mixing operators (see \cite{11}). Therefore every topologically mixing operator is compound. However, not every compound operator is topologically mixing as shown in the following example
%--------------------------------------------------------------------------------------
\begin{ex}
Let $T:\LZ\to\LZ$ be the bilateral forward weighted shift with
the weight sequence
\begin{equation*}
w_n=
\begin{cases} R_1 & \text{if $n \geq 0$,}
\\
R_2 &\text{if $n< 0$.}
\end{cases}
\end{equation*}
where $R_1, R_2\in \R^+; R_1<R_2$. Then $T$ is compound not topologically mixing.
\end{ex}
\begin{proof}
Let $U$ and $V$ be two open sets. Since $T$ satisfies diskcyclic criterion with respect to the sequence $\seq{n}_{n\in\N}$ (see \cite[Example 2.20.]{m3}), then there exist two dense set $D_1$ and $D_2$ such that 
\begin{enumerate}
\item \label{8a} For each $y\in V$, $\lim_{n\to \iy}\norm{B^ny}\to 0$ (where $B$ is the bilateral backward weighted shift)
\item \label{8b} For all $x \in D_1$ and $y\in D_2$, $\lim_{n\to \iy}\norm{T^{n}x}\norm{B^ny}\to 0$  ;
\item \label{8c} For each $y\in D_2$, $T^{n}B^ny =y$ ;
\end{enumerate}
Let $x\in U\cap D_1$, $y\in V\cap D_2$ and $\lm_n=\frac{\norm{B^ny}^\frac{1}{2}}{\norm{T^nx}^{\frac{1}{2}}}$. For an arbitrary large $N\in \N$, suppose that $z=x+\lm_N^{-1}B^Ny$, then, $\norm{z-x}=\brc{\norm{T^Nx}\norm{B^Ny}}^\frac{1}{2}\to 0$ by (\Cref{8a}) and thus $z\in U$. Also, $\lm_NT^Nz=\lm_NT^Nx+T^NB^Ny=\lm_NT^Nx+y$ by (\Cref{8c}). Then $\norm{\lm_NT^Nz-y}=\lm_N\norm{T^Nx}=\brc{\norm{T^{N}x}\norm{B^Ny}}^\frac{1}{2}\to 0$ by (\Cref{8b}) and so $\lm_NT^Nz\in V$. Since $\lim_{n\to \iy}\norm{T^nx}\to \iy$ and $\lim_{n\to \iy}\norm{B^nx}\to 0$ then $\lim_{n\to \iy} \lm_n\to 0$. Therefore $\lm_NT^NU\cap V\neq \phi$. Since $N$ is arbitrary, we can assume that $J(U,V)=\set{(\lm_n,n):n\ge N}$ and hence $T$ is compound. Since $T$ is not topological transitive (see \cite[Example 2.20.]{m3}), then $T$ can not be topologically mixing.
\end{proof}
%--------------------------------------------------------------------------------------
The following theorem extends the Godefroy-Shapiro Criterion \cite{28} for topologically mixing operators to compound operators.

\begin{thm} \label{190}
Let $T\in \bx$. If there exists a $p\ge 1$ such that
$$A=\spn \set{x\in X: Tx=\al x \mbox { for some } \al \in \co; |\al|<p};$$
$$B=\spn \set{y\in X: Ty=\lm y \mbox { for some } \lm \in \co; |\lm|>p};$$
are dense in $X$, then $T$ is compound.
\end{thm}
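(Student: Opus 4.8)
The plan is to run the classical Godefroy--Shapiro argument with the scale $\al_n=p^{-n}$ inserted to absorb the threshold $p$. Fix nonempty open sets $U,V\subset X$. Using the density of $A$ and $B$, I would choose $x\in U\cap A$ and $y\in V\cap B$. Since $A$ and $B$ are \emph{spans}, these are finite combinations of eigenvectors: $x=\sum_j c_j x_j$ with $Tx_j=\be_j x_j$, $|\be_j|<p$, and $y=\sum_j d_j y_j$ with $Ty_j=\lm_j y_j$, $|\lm_j|>p$. On $B$ I define $S$ by $Sy_j=\lm_j^{-1}y_j$, extended linearly; this is well defined because the sum of the eigenspaces for distinct eigenvalues is direct, and it satisfies $T^nS^n y=y$ for every $n$.

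Writing $r=\max_j|\be_j|<p$ and $R=\min_j|\lm_j|>p$, the finiteness of the sums yields constants $C,C'$ with $\norm{T^n x}\le C\,r^n$ and $\norm{S^n y}\le C'R^{-n}$. Taking $\al_n=p^{-n}$, which lies in $\bo$ precisely because $p\ge 1$ gives $0<p^{-n}\le 1$, I consider $z_n=x+p^{\,n}S^n y$. Then
$$\norm{z_n-x}=p^{\,n}\norm{S^n y}\le C'\brc{p/R}^{\,n}\longrightarrow 0,$$
so $z_n\in U$ for large $n$, while, using $T^nS^n y=y$,
$$\al_n T^n z_n=p^{-n}T^n x+T^nS^n y=p^{-n}T^n x+y,$$
and $\norm{\al_n T^n z_n-y}=p^{-n}\norm{T^n x}\le C\brc{r/p}^{\,n}\longrightarrow 0$, so $\al_n T^n z_n\in V$ for large $n$.

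Hence there is an $N$ such that for all $n\ge N$ both $z_n\in U$ and $\al_n T^n z_n=T^n(\al_n z_n)\in V$, giving $T^n(\al_n U)\cap V\neq\phi$ with the single sequence $\seq{\al_n}=\seq{p^{-n}}\subset\bo$; by definition $T$ is compound. (One could equally phrase this through \Cref{163} by checking that $J(U,V)$ is bifinite.) I expect the only real obstacle to be identifying the correct scale: the requirement $z_n\to x$ forces $R^{-n}/|\al_n|\to0$, while the requirement $\al_n T^n z_n\to y$ forces $|\al_n|r^n\to0$, and these pull $|\al_n|$ in opposite directions. They are compatible exactly because $r<p<R$ opens a gap $R^{-n}\ll|\al_n|\ll r^{-n}$ into which $p^{-n}$ falls, while $p\ge1$ keeps it inside the unit disk; everything else is routine bookkeeping, collapsing to topological mixing when $p=1$ (so $\al_n\equiv1$).
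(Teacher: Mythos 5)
Your proof is correct and follows essentially the same route as the paper's: you perturb $x\in U\cap A$ by a vanishing vector built from the eigen-expansion of $y\in V\cap B$ so that $\al_nT^n$ sends the perturbed point back near $y$; your $p^{\,n}S^ny$ is exactly the paper's $z_n=\sum_i b_i(c/\lm_i)^ny_i$ with the particular choice $c=p$ (the paper allows any $c$ with $p\le|c|<\min_i|\lm_i|$). The only cosmetic difference is that you package the construction via a right inverse $S$ and explicit decay rates $r,R$ rather than writing $z_n$ in closed form.
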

\begin{proof}
Let $U$ and $V$ be nonempty open sets in $X$. Since $A$ and $B$ are dense, then there exist $x\in A \cap U$ and $y\in B\cap V$. Then $x=\sum_{i=1}^k a_i x_i$ and $y=\sum_{i=1}^k b_i y_i$ where $a_i,b_i\in \co$ for all $1\le i \le k$ . Also, $Tx_i=\al_i x_i$ and  $Ty_i=\lm_i y_i$ where $|\al_i|<p$ and $|\lm_i|>p$ for all $1\le i \le k$. Let $c\in \co$ be a scalar such that  $p\le |c| < |\lm_i|$ for all $1\le i \le k$, and let  
$$z_n=\sum_{i=1}^k b_i (\frac{c}{\lm_i})^n y_i \mbox{ for all } n\ge 0$$
Then 
$$\frac{1}{c^n} T^n x=\sum_{i=1}^k a_i (\frac{\al_i}{c}) ^n x_i\to 0 \mbox{ and } z_n\to 0 \mbox{ as } n\to \iy$$
and $\frac{1}{c^n} T^n z_n=y$ for all $n\ge 0$. It follows that there is a positive integer $r$ such that for all $n\ge r$
$$x+z_n\in U \quad \mbox{ and } \quad \frac{1}{c^n}T^n(x+z_n)=\frac{1}{c^n}T^nx+\frac{1}{c^n}T^nz_n\in V \mbox{ for all } n\ge r$$
Therfore, $\frac{1}{c^n}T^n U\cap V\neq \phi $ for all $n\ge r$. It follows that $T$ is compound.
\end{proof}
Note that in the above theorem, if $p=1$, then it will be a Godefroy-Shapiro criterion for topologically mixing operators. \\

%--------------------------------------------------------------------------------------
The following theorem gives another criterion for compound operators.

\begin{thm}\label{c1}
Let $T \in \bx $, suppose that there exist a sequence $\seq{\lm_{n}} \subset \co\backslash\set{0}$ such that $\abs{\lm_{n}}\le 1$ for all $n\in \N$, two dense sets $D_1,D_2 \subset \h$ and a sequence of maps $S_{n} : Y \to \h$ such that as $n\to \iy$:
\begin{enumerate}
\item $\lm_{n}T^{n}x \to 0$ for all $x \in D_1$;
\item $\frac{1}{\lm_{n}}S_{n}y \to 0$ for all $y \in D_2$;
\item $T^{n} S_{n}y \to y$ for all $y \in D_2$.
\end{enumerate}
Then  $T$ is compound and it is called compound with respect to the sequence $\seq{\lm_{n}}$.
\end{thm}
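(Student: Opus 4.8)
The plan is to follow the same template as the proof of \Cref{175} and of the bilateral-shift example, but specialized to a single operator while tracking the dependence on $n$, so as to produce the uniform ``for all $n\ge N$'' conclusion that compoundness demands. Fix nonempty open sets $U,V\subset X$. Using the density of $D_1$ and $D_2$, I would choose a single pair $x\in U\cap D_1$ and $y\in V\cap D_2$; these stay fixed throughout, while the witnesses constructed below vary with $n$.

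For each $n$ set $z_n=x+\frac{1}{\lambda_n}S_n y$. By hypothesis $(2)$, $\frac{1}{\lambda_n}S_n y\to 0$, so $z_n\to x$; since $U$ is open and $x\in U$, there is $N_1$ with $z_n\in U$ for all $n\ge N_1$. Next, using linearity of $T$, compute
$$\lambda_n T^n z_n=\lambda_n T^n x+T^n S_n y.$$
By $(1)$ the first term tends to $0$ and by $(3)$ the second tends to $y$, so $\lambda_n T^n z_n\to y$; since $V$ is open and $y\in V$, there is $N_2$ with $\lambda_n T^n z_n\in V$ for all $n\ge N_2$.

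I would then put $N=\max\set{N_1,N_2}$ and $\alpha_n=\lambda_n$. Because $\lambda_n\in\co\setminus\set{0}$ with $\abs{\lambda_n}\le 1$, each $\alpha_n$ lies in $\bo$, so $\seq{\alpha_n}\subset\bo$ as the definition requires. For every $n\ge N$ the point $\lambda_n T^n z_n$ lies in $V$ and, since $z_n\in U$, also lies in $T^n(\alpha_n U)=\lambda_n T^n U$; hence $T^n(\alpha_n U)\cap V\neq\phi$ for all $n\ge N$, which is precisely the definition of a compound operator (compound with respect to $\seq{\lambda_n}$).

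The argument carries no real technical obstacle; the only point needing care is conceptual. Unlike a transitivity-type statement, where one produces a single good $n$, here the two limits must be combined into one threshold $N$ beyond which both membership conditions hold simultaneously, so that a single fixed scalar sequence $\seq{\lambda_n}$ witnesses the intersection being nonempty for every $n\ge N$. One should also note that the witness $z_n$ is permitted to depend on $n$, which is consistent with the definition, and record that $\alpha_n\neq 0$ throughout, guaranteed by the assumption $\lambda_n\in\co\setminus\set{0}$.
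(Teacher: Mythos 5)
Your proof is correct and follows essentially the same route as the paper's: pick $x\in U\cap D_1$, $y\in V\cap D_2$, perturb $x$ by $\frac{1}{\lambda_n}S_n y$, and use the three hypotheses to get the two limits, combined into a single threshold $N$. If anything, you are slightly more careful than the paper, which writes $\lambda_n T^n(x+\frac{1}{\lambda_n}S_n y)=\lambda_n T^n x+y$ as an exact identity rather than invoking $T^n S_n y\to y$ as condition $(3)$ actually states.
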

\begin{proof}
Let $U,V$ be non empty open sets, and let $x\in U\cap D_1$ and $y\in V\cap D_2$. Then $x+\frac{1}{\lm_{n}}S_{n}y \to x\in U$ as $n\to \iy$ and $\lm_{n}T^{n} (x+\frac{1}{\lm_{n}}S_{n}y)=\lm_{n}T^{n}x+y\to y \in V $. Thus there exists a large positive integer $N$ such that $T^n\lm_{n} U \cap V \neq\phi $ for all $n\ge N$. It follows that $T$ is compound.
\end{proof}
%--------------------------------------------------------------------------------------
The following theorem gives another criterion for compound operators. 
\begin{thm}\label{c2}
Let $T \in \bx $. If there exist two dense sets $D_1,D_2 \subset \h$ and a sequence of maps $S_{n} : Y \to \h$ such that:
\begin{enumerate}
\item $(T^{n}x)( S_{n}y)\to 0$ for all $x \in D_1$ and $y \in D_2$ ;
\item $S_{n}y \to 0$ for all $y \in D_2$;
\item $T^{n} S_{n}y \to y$ for all $y \in D_2$.
\end{enumerate}
Then  $T$ is compound.
\end{thm}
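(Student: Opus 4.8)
The plan is to reduce \Cref{c2} to \Cref{c1} by manufacturing, out of the scalar-free hypotheses, a sequence of weights $\langle \lambda_n\rangle \subset \bo$ that drives the disk orbit; this mirrors the passage from \Cref{176} to \Cref{175} carried out in \Cref{177}, as well as the explicit computation in the bilateral-shift example above. Fix nonempty open sets $U,V$. Since $D_1,D_2$ are dense I would pick $x\in U\cap D_1$ and $y\in V\cap D_2$, and (reading condition $(1)$ as $\norm{T^nx}\,\norm{S_ny}\to 0$, exactly as the product of norms is used in the proof of \Cref{177}) define
$$\lambda_n=\min\!\left(1,\ \frac{\norm{S_ny}^{1/2}}{\norm{T^nx}^{1/2}}\right),$$
with the degenerate indices where $T^nx=0$ or $S_ny=0$ absorbed by setting $\lambda_n=1$ there. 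By construction $0<\lambda_n\le 1$, so $\langle\lambda_n\rangle\subset\bo$.

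The next step is to check the two decay estimates. On the indices where the minimum is the geometric-mean ratio, hypothesis $(1)$ gives $\lambda_n\norm{T^nx}=(\norm{T^nx}\,\norm{S_ny})^{1/2}\to 0$ and likewise $\tfrac{1}{\lambda_n}\norm{S_ny}=(\norm{T^nx}\,\norm{S_ny})^{1/2}\to 0$. On the remaining indices $\lambda_n=1$, so the first quantity is $\norm{T^nx}<\norm{S_ny}\to 0$ and the second is $\norm{S_ny}\to 0$, both controlled by hypothesis $(2)$. Hence $\lambda_n T^nx\to 0$ and $\tfrac{1}{\lambda_n}S_ny\to 0$ along the full sequence.

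Having produced the weights, I would run the same witness construction as in \Cref{c1} and in the example: set $z_n=x+\tfrac{1}{\lambda_n}S_ny$. Then $\norm{z_n-x}=\tfrac{1}{\lambda_n}\norm{S_ny}\to 0$, so $z_n\in U$ for all large $n$, while $\lambda_nT^nz_n=\lambda_nT^nx+T^nS_ny\to 0+y=y$ using $(3)$, so $\lambda_nT^nz_n\in V$ for all large $n$. Thus there is an $N\in\N$ such that $z_n$ witnesses $T^n(\lambda_nU)\cap V\neq\phi$ for every $n\ge N$, which is precisely the defining property of a compound operator (equivalently, $J(U,V)$ is bifinite and one concludes by \Cref{163}).

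The main obstacle is the membership constraint $\lambda_n\in\bo$, i.e. $0<|\lambda_n|\le 1$: unlike the bilateral-shift example, where $\norm{T^nx}\to\iy$ forced the raw ratio below $1$, here the geometric-mean weight need not be bounded by $1$, which is exactly why hypothesis $(2)$ is indispensable for the capped indices, and why one cannot simply quote \Cref{c1} verbatim (the weights depend on the chosen $x,y$, so the argument must be run directly for the given $U,V$). Keeping $\lambda_n$ bounded away from $0$ forces the separate treatment of the indices with $S_ny=0$ or $T^nx=0$; these occur only finitely often for $y\neq 0$ and are therefore harmless once one restricts to $n\ge N$. Everything beyond this is the routine limit bookkeeping already present in \Cref{c1} and \Cref{177}.
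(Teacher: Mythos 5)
Your proof is correct and follows essentially the route the paper itself indicates: it manufactures the scalar sequence $\seq{\lm_n}$ from the scalar-free hypotheses (in the spirit of \Cref{177} and the bilateral-shift example) and then reruns the witness construction of \Cref{c1}. In fact you supply details the paper's one-line proof omits — the cap $\min(1,\cdot)$ guaranteeing $\lm_n\in\bo$, the finitely many degenerate indices, and the observation that the weights depend on $U,V$ so \Cref{c1} cannot be invoked verbatim — so no further changes are needed.
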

The proof of the above theorem can be followed by showing that both compound criteria in \Cref{c1} and \Cref{c2} are equivalent by using the same lines in \Cref{177}.\\

Now, the following two theorems give a partial answer to \Cref{104}.
\begin{thm}\label{220}
Let $T=T_1\oplus \ldots \oplus T_k$. If every component of $T$ is disk transitive and at least $(k-1)$ of them is compound then $T$ is $k$-bitransitive.
\end{thm}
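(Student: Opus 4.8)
The plan is to verify the cross-set criterion of \Cref{126}: for an arbitrary $2k$-tuple of nonempty open sets $U_i,V_i\subset X$ ($1\le i\le k$), I would show that $\bigcap_{i=1}^k C_{T_i}(U_i,V_i)\neq\phi$. After relabelling, assume $T_2,\ldots,T_k$ are the $k-1$ compound components and $T_1$ is the remaining disk transitive one (recall every compound operator is disk transitive, so the hypotheses are consistent).

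For each $i$ with $2\le i\le k$, compoundness applied to $U_i,V_i$ yields some $N_i\in\N$ and a sequence $\seq{\al_n}\subset\bo$ with $T_i^n(\al_n U_i)\cap V_i\neq\phi$ for all $n\ge N_i$; equivalently, by \Cref{163}, $J_{T_i}(U_i,V_i)$ is bifinite. In either form this says $C_{T_i}(U_i,V_i)\supseteq\set{n\in\N: n\ge N_i}$, so each such cross set is cofinite. Since a finite intersection of cofinite subsets of $\N$ is cofinite, $\bigcap_{i=2}^k C_{T_i}(U_i,V_i)$ is cofinite.

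The core step is to upgrade the disk transitivity of $T_1$ from ``$C_{T_1}(U_1,V_1)\neq\phi$'' to ``$C_{T_1}(U_1,V_1)$ is infinite''. I would do this by a recurrence argument. Given any $n_0\in C_{T_1}(U_1,V_1)$, choose $\al_0\in\bo$ and $u_0\in U_1$ with $\al_0 T_1^{n_0}u_0\in V_1$, and set $U_1'=U_1\cap (T_1^{n_0})^{-1}(\al_0^{-1}V_1)$, which is open and contains $u_0$, hence nonempty. Applying disk transitivity to the pair $(U_1',U_1')$ produces $m\ge 1$ and $\be\in\bo$ with $T_1^m(\be U_1')\cap U_1'\neq\phi$; picking $u'\in U_1'$ with $\be T_1^m u'\in U_1'$ and composing the two conditions gives $\al_0\be\, T_1^{n_0+m}u'\in V_1$ with $u'\in U_1$ and $\al_0\be\in\bo$. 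Thus $n_0+m\in C_{T_1}(U_1,V_1)$ and $n_0+m>n_0$, so $C_{T_1}(U_1,V_1)$ has no largest element and is infinite.

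Since an infinite subset of $\N$ meets every cofinite subset, $\bigcap_{i=1}^k C_{T_i}(U_i,V_i)=C_{T_1}(U_1,V_1)\cap\bigcap_{i=2}^k C_{T_i}(U_i,V_i)\neq\phi$, and \Cref{126} yields that $T$ is $k$-bitransitive. The main obstacle is precisely the core step: disk transitivity a priori only gives a nonempty cross set, so one must manufacture arbitrarily large return times while keeping all multipliers inside $\bo$ (which is fine, since $|\al_0\be|\le 1$ and a product of nonzero scalars is nonzero) and ensuring that the extra power $m$ is genuinely positive.
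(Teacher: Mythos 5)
Your proof is correct and follows the same overall strategy as the paper's: pass to the cross-set formulation of \Cref{126}, note that compoundness makes $C_{T_i}(U_i,V_i)$ cofinite for the $k-1$ compound components, and intersect with the cross set of the one remaining disk transitive component. The substantive difference is at the step you correctly identify as the core one. The paper (which writes out only $k=2$) simply asserts that the junction set of the disk transitive component is infinite and then uses the stronger fact that its cross set contains arbitrarily large $n$; neither claim is justified there, and infiniteness of $J$ alone would not even suffice, since infinitely many scalars could sit over a single $n$. Your recurrence argument --- forming $U_1'=U_1\cap(T_1^{n_0})^{-1}(\al_0^{-1}V_1)$, applying disk transitivity to the pair $(U_1',U_1')$, and composing to get the return time $n_0+m>n_0$ with multiplier $\al_0\be\in\bo$ --- is exactly the missing lemma (the classical ``return sets of transitive maps are infinite'' bootstrap, adapted so the scalars stay in the punctured disk), so your write-up is more complete than the paper's at the one point where something genuinely needs checking. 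The remaining ingredients (cofinite sets are closed under finite intersection; an infinite subset of $\N$ meets every cofinite one) match the paper, and your version has the added benefit of handling general $k$ and the case where all $k$ components are compound without any special pleading.
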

\begin{proof}
We will prove the case $k=2$ and the other cases are same. Let $T_1$ and $T_2$ be disk transitive operators and let $T_1$ be compound without loss of generality. Let $U_1,U_2,V_1,V_2$ be nonempty open sets, then there exist $N_1, N_2\in \N$, $\al_1\in \bo$ and a sequence $\seq{\beta_n}_{n\in N} \subset \bo$ such that $T_2^{N_1}\al_1 U_1\cap U_2\neq \phi$ and $T_1^{n}\beta_n V_1\cap V_2\neq \phi$ for all $n\ge N_2$. Since $J_{T_2}(U_1,U_2)$ is infinite then there exist $N\in \N, N\ge N_2 $ and $\al\in \bo$ such that $T_2^{N}\al U_1\cap U_2\neq \phi$ and $T_1^{N}\beta_N V_1\cap V_2\neq \phi$. It follows that $C_{T_1} (U_1, U_2) \cap C_{T_2} (V_1, V_2)\neq \phi$ and hence $T$ is $2$-bitransitive. 
\end{proof}

%-----------------------------------------------------------------------------------------------------------------------
\begin{thm}
If $r$ operators satisfy diskcyclic criterion for the same sequence $\seq{n_k}_{n\in N}$, then their direct sum is an $r$-bitransitive operator.
\end{thm}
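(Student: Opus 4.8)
The plan is to observe that the hypothesis is engineered precisely to supply the data needed by the $r$-bitransitive criterion of \Cref{175}, so that the entire proof reduces to matching hypotheses. First I would unpack what it means for each $T_i$ to satisfy the diskcyclic criterion along the \emph{common} sequence $\seq{n_k}$: for every $1\le i\le r$ there are scalars $\seq{\lm^{(i)}_{n_k}}\subset \bo$, dense sets $X_i,Y_i\subset X$, and maps $S_i:Y_i\to X$ such that, as $k\to\iy$, one has $\lm^{(i)}_{n_k}T_i^{n_k}x_i\to 0$ for $x_i\in X_i$, then $\frac{1}{\lm^{(i)}_{n_k}}S_i^{n_k}y_i\to 0$ for $y_i\in Y_i$, and finally $T_i^{n_k}S_i^{n_k}y_i\to y_i$ for $y_i\in Y_i$. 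These are exactly the single-component versions of conditions (1)--(3) in \Cref{175}.

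Next I would invoke the elementary fact that, because $\bigoplus_{i=1}^r X$ has only finitely many summands, convergence of a sequence of tuples there is equivalent to coordinatewise convergence of the entries (taking, for instance, the norm of a tuple to be the sum of the coordinate norms, as is already done implicitly in \Cref{177}). Consequently the three single-component limits above assemble coordinate-by-coordinate into the three direct-sum limits demanded by \Cref{175}: the tuple $\bigoplus_{i=1}^r\lm^{(i)}_{n_k}T_i^{n_k}(x_1,\ldots,x_r)$ tends to $(0,\ldots,0)$ precisely because each of its coordinates does, and the same holds verbatim for conditions (2) and (3). With the hypotheses of \Cref{175} thereby verified, that theorem immediately yields that $T_1\oplus\cdots\oplus T_r$ is $r$-bitransitive.

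The single point carrying the real weight of the statement, which I would flag explicitly rather than bury, is that all of these limits are taken along the \emph{same} index sequence $\seq{n_k}$. It is exactly this shared sequence that lets the $r$ independent coordinatewise convergences be synchronized into one limit of tuples; were the operators to satisfy the diskcyclic criterion only along distinct sequences, one could not drive all coordinates to their targets simultaneously and the assembly in the previous paragraph would collapse. This is the analogue of the familiar obstruction to the hypercyclicity of direct sums, and it is why the hypothesis insists on a common $\seq{n_k}$. Beyond this bookkeeping I do not anticipate any genuine difficulty: the entire content of the argument is the identification of the diskcyclic-criterion data with the hypotheses of \Cref{175}, after which the conclusion is automatic.
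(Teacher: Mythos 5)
Your proposal is correct and follows essentially the same route as the paper: both arguments simply verify the $r$-bitransitive criterion coordinatewise, using the common sequence $\seq{n_k}$ to synchronize the $r$ limits into a single limit of tuples. The only cosmetic difference is that you unpack the diskcyclic criterion in its scalar form and feed it into \Cref{175}, whereas the paper uses the product form (conditions $T_i^{n_k}x_i\,S_i^{n_k}y_i\to 0$, $S_i^{n_k}y_i\to 0$, $T_i^{n_k}S_i^{n_k}y_i\to y_i$) and feeds it into \Cref{176}; these are interchangeable by \Cref{177}.
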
  
\begin{proof}
Let $T_i$ satisfies diskcyclic criterion with respect to the sequence $\seq{n_k}_{k\in \N}$ for all $1\le i\le r$. Then for all $1\le i\le r$, there exist $2k$ dense sets $D_i,D'_{i}$, $r$ maps $S_i$ such that 
\begin{eqnarray}
T_i^{n_k}x_i S_i^{n_k}y_i\to 0 \label{a20}\\
S_i^{n_k}y_i\to 0 \label{b20}\\
T_i^{n_k}S_i^{n_k}y_i\to y_i \label{c20}
\end{eqnarray}
as $k\to \iy$. By equation (\ref{a20}), we get 
\begin{eqnarray}
(T_1^{n_k}x_1 S_1^{n_k}y_1, \ldots, T_r^{n_k}x_r S_r^{n_k}y_r)\to (0,\ldots,0) \nonumber\\
(T_1^{n_k}x_1 \ldots, T_r^{n_k}x_r )(S_1^{n_k}y_1, \ldots, S_r^{n_k}y_r)\to (0,\ldots,0) \nonumber\\
\bigoplus_{i=1}^r T_i^{n_k} (x_1,\ldots, x_r) \bigoplus_{i=1}^r S_i^{n_k} (y_1,\ldots, y_r) \to (0,\ldots,0) \label{a21}
\end{eqnarray}
By equation (\ref{a21}), condition (\ref{a22}) of  \Cref{176} holds. \\
Also by equation (\ref{b20}), we get 
\begin{eqnarray}
(S_1^{n_k}y_1, \ldots,S_r^{n_k}y_r)\to (0,\ldots,0) \nonumber\\
\bigoplus_{i=1}^r S_i^{n_k} (y_1,\ldots, y_r) \to (0,\ldots,0) \label{a23}
\end{eqnarray}
It follows by equation (\ref{a23}), that condition (\ref{b22}) of  \Cref{176} holds. \\
Finally, by equation (\ref{c20}), we get 
\begin{eqnarray}
(T_1^{n_k} S_1^{n_k}y_1, \ldots, T_r^{n_k}S_r^{n_k}y_r)\to (y_1,\ldots,y_r) \nonumber\\
\bigoplus_{i=1}^r T_i^{n_k}S_i^{n_k} (y_1,\ldots, y_r) \to (y_1,\ldots,y_r) \label{a24}
\end{eqnarray}
It follows by equation (\ref{a24}), that condition (\ref{c22}) of  \Cref{176} holds. By \Cref{176}, $\bigoplus_{i=1}^r T_i$ is $r$-bitransitive.
\end{proof}

\newenvironment{conclusion}%
    {\null\begin{center}%
    \bfseries Conclusion\end{center}}%
    
\begin{conclusion}
We define new classes of operators on Banach spaces which are called $k$-bitransitive operators and compound operators. We create some criteria for them, and we extend the Godefroy-Shapiro Criterion for topologically mixing operators to compound operators. We use these operators to show that the direct sum of $k$-diskcyclic operators is $k$-bitransitive for some special cases. However, it seems that the \Cref{104} remains open. Therfore, one may answer a special case of that question, that is, when all $k$ diskcyclic operators are identical. Particularly. 
\begin{center}{\it
If $T$ is a diskcyclic operator. What about the $k$-fold direct sum of $T$?}
\end{center}
\end{conclusion}

\end{document}